\theoremstyle{plain}
\newtheorem{lemma}{Lemma}
\newtheorem{proposition}[lemma]{Proposition}
\newtheorem{theorem}[lemma]{Theorem}
\newtheorem{corollary}[lemma]{Corollary}
\theoremstyle{remark}
\newtheorem*{remarks}{Remarks}
\title{Each friend of 10 has at least 10 nonidentical prime factors}
\author{Henry (Maya) Robert Thackeray}
\begin{document}
\begin{abstract}
For each positive integer \(n\), if the sum of the factors of \(n\) is divided by \(n\), then the result is called the abundancy index of \(n\). If the abundancy index of some positive integer \(m\) equals the abundancy index of \(n\) but \(m\) is not equal to \(n\), then \(m\) and \(n\) are called friends. A positive integer with no friends is called solitary. The smallest positive integer that is not known to have a friend and is not known to be solitary is 10.

It is not known if the number 6 has odd friends, that is, if odd perfect numbers exist. In a 2007 article, Nielsen proved that the number of nonidentical prime factors in any odd perfect number is at least 9. A 2015 article by Nielsen, which was more complicated and used a computer program that took months to complete, increased the lower bound from 9 to 10.

This work applies methods from Nielsen's 2007 article to show that each friend of 10 has at least 10 nonidentical prime factors.

This is a formal write-up of results presented at the Southern Africa Mathematical Sciences Association Conference 2023 at the University of Pretoria.
\end{abstract}
\maketitle
Corresponding author: Henry (Maya) Robert Thackeray

Address: Department of Mathematics and Applied Mathematics, University of Pretoria, Pretoria, 0002 South Africa

Email: maya.thackeray@up.ac.za

\mbox{}

Declarations of interest: none

\mbox{}

MSC2020 code: 11A25

Keywords: abundancy, number theory, computer, friend, prime

\tableofcontents

\section{Introduction}

For each positive integer \(n\), let \(\sigma(n)\) be the sum of the factors of \(n\), and define the \emph{abundancy index} of \(n\) to be \(\sigma_{-1}(n) = \sigma(n)/n\); for example, \(\sigma_{-1}(10) = (1 + 2 + 5 + 10)/10 = 9/5\).

If \(m\) and \(n\) are positive integers, then \(m\) is called a \emph{friend} of \(n\) if \(m \neq n\) and \(\sigma_{-1}(m) = \sigma_{-1}(n)\); a positive integer \(n\) is called \emph{solitary} if no friend of \(n\) exists. This terminology appeared in a problem in the \emph{American Mathematical Monthly}, where it was conjectured that the set of solitary numbers has density zero \cite{AHG1977}. The smallest positive integer that is not known to have a friend and is not known to be solitary is 10; it seems, although it has not been proved, that for every prime \(p\) that is at least 5, the number \(2p\) is solitary (see \cite{Wnd}).

It is not known if the number 6 has odd friends, that is, if odd perfect numbers exist. In a 2007 article \cite{N2007}, Nielsen proved that the number of nonidentical prime factors in any odd perfect number is at least 9. A 2015 article by Nielsen \cite{N2015}, which was more complicated and used a computer program that took months to complete, increased the lower bound from 9 to 10.

This work applies methods from Nielsen's 2007 article \cite{N2007} to show that each friend of 10 has at least 10 nonidentical prime factors.

This is a formal write-up of results presented in a talk at the Southern Africa Mathematical Sciences Association Conference 2023, held at the University of Pretoria, Pretoria, South Africa, in November 2023.

\section{Theory}

The following definitions are used for positive integers \(m\), positive integers \(n\), and primes \(p\): the notation \(m \mid n\) means ``\(m\) divides \(n\)'', the notation \(m \hspace{-0.9mm}\not\hspace{0.9mm}\mid n\) means ``\(m\) does not divide \(n\)'', the number \(v_{p}(n)\) is the largest nonnegative integer \(v\) such that \(p^{v} \mid n\), the integer \(v_{p}(m/n)\) is \(v_{p}(m) - v_{p}(n)\), and the number \(\omega(n)\) is the number of nonidentical prime factors of \(n\).

Some well-known facts about \(\sigma_{-1}\) are recalled.

\begin{lemma}
Let \(p\) and \(q\) be primes, and let \(a\), \(b\), \(m\), and \(n\) be positive integers.
\begin{itemize}
\item[(a)] If \(a < b\), then \((p + 1)/p \leq \sigma_{-1}(p^{a}) < \sigma_{-1}(p^{b}) < p/(p - 1) =: \sigma_{-1}(p^{\infty})\).
\item[(b)] If \(p > q\), then \(\sigma_{-1}(p^{a}) < \sigma_{-1}(q^{b})\).
\item[(c)] If \(m\) and \(n\) are relatively prime, then \(\sigma_{-1}(mn) = \sigma_{-1}(m)\sigma_{-1}(n)\).
\item[(d)] If \(m \mid n\), then \(\sigma_{-1}(m) \leq \sigma_{-1}(n)\).
\item[(e)] If \(m \mid n\) and \(\sigma_{-1}(m) = \sigma_{-1}(n)\), then \(m = n\). \hfill \(\qed\)
\end{itemize}
\end{lemma}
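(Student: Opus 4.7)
The plan is to reduce the whole lemma to the geometric-series formula \(\sigma(p^a) = (p^{a+1} - 1)/(p-1)\), which rewrites as \(\sigma_{-1}(p^a) = (p/(p-1))(1 - p^{-(a+1)})\). From this, (a) falls out immediately: the factor \(1 - p^{-(a+1)}\) is strictly increasing in \(a\) and strictly less than \(1\), yielding \(\sigma_{-1}(p^a) < \sigma_{-1}(p^b) < p/(p-1)\) whenever \(a < b\), while substituting \(a = 1\) recovers the lower bound \((p+1)/p\). For (b), I would chain \(\sigma_{-1}(p^a) < p/(p-1)\) and \((q+1)/q \leq \sigma_{-1}(q^b)\) from (a) and verify the middle link \(p/(p-1) \leq (q+1)/q\); cross-multiplying reduces this to \(q \leq p - 1\), which holds for integers with \(p > q\).

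For (c), the standard argument uses the bijection sending each factor \(d\) of \(mn\) (when \(\gcd(m,n) = 1\)) to the pair \((\gcd(d,m), \gcd(d,n))\); summing gives \(\sigma(mn) = \sigma(m)\sigma(n)\), and dividing by \(mn\) yields the claim.

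For (d) and (e), I would write \(m = \prod_p p^{v_p(m)}\) and \(n = \prod_p p^{v_p(n)}\) and apply (c) to factor \(\sigma_{-1}(m)\) and \(\sigma_{-1}(n)\) prime by prime. Since \(m \mid n\) forces \(v_p(m) \leq v_p(n)\) for every prime \(p\), part (a) (together with the trivial case \(\sigma_{-1}(p^0) = 1\)) gives \(\sigma_{-1}(p^{v_p(m)}) \leq \sigma_{-1}(p^{v_p(n)})\) factor by factor, proving (d). For (e), if \(m \neq n\) then at least one prime \(p\) satisfies \(v_p(m) < v_p(n)\), making the corresponding factor-wise inequality strict and forcing \(\sigma_{-1}(m) < \sigma_{-1}(n)\), which contradicts the hypothesis. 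The only real obstacle is bookkeeping in the last step---handling primes that divide one of \(m, n\) but not the other via the convention \(\sigma_{-1}(p^0) = 1\)---but since these results are standard, I expect the lemma to serve mostly as a foundation for the substantive work to follow.
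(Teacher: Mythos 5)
Your proof is correct: the geometric-series rewriting \(\sigma_{-1}(p^{a}) = \frac{p}{p-1}\left(1 - p^{-(a+1)}\right)\) cleanly gives (a) and (b), and the multiplicativity plus prime-by-prime comparison (with the convention \(\sigma_{-1}(p^{0}) = 1\)) handles (c)--(e). The paper states this lemma as a collection of well-known facts and omits the proof entirely, so there is nothing to compare against; your argument is the standard one and supplies the missing details correctly.
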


The following foundational result underpins the later conclusions of this paper.

\begin{proposition}
Let \(n\) be a friend of \(10\).
\begin{itemize}
\item[(a)] The number \(n\) is a square, \(n\) is divisible by \(5\), and \(n\) is coprime to \(6\).
\item[(b)] The number \(n\) has at least five nonidentical prime factors.
\end{itemize}
\end{proposition}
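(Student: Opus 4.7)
The plan is to turn the equation $5\sigma(n) = 9n$ into progressively stronger constraints on $n$ via the monotonicity and multiplicativity of Lemma 1, and then to derive the prime-count bound in (b) by combining Lemma 1(a),(c) with the standard product estimate.

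For part (a) I would argue in four stages. First, from $5\sigma(n) = 9n$ and $\gcd(5,9) = 1$ we get $5 \mid n$. Second, if $2 \mid n$ then $10 \mid n$, and Lemma 1(d),(e) give $\sigma_{-1}(10) < \sigma_{-1}(n)$ since $n \neq 10$; but both equal $9/5$, a contradiction, so $n$ is odd. Third, with $n$ odd and $5 \mid n$, $\sigma(n) = 9n/5$ is odd, and since each factor $1 + p + \cdots + p^{a}$ of $\sigma(n)$ (for $p$ odd) is odd exactly when $a$ is even, $n$ is a perfect square. Fourth, suppose for contradiction that $3 \mid n$; then $9 \mid n$ and $25 \mid n$, so $n = 3^{2c} 5^{2d} k$ with $c, d \geq 1$, $\gcd(k,15) = 1$, and $k$ a square. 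Lemma 1(a) gives $\sigma_{-1}(3^{2c}) \sigma_{-1}(5^{2d}) \geq (121/81)(31/25) = 3751/2025 > 9/5$ whenever $c \geq 2$, and $\geq (13/9)(781/625) = 10153/5625 > 9/5$ whenever $d \geq 2$; each contradicts $\sigma_{-1}(3^{2c})\sigma_{-1}(5^{2d})\sigma_{-1}(k) = 9/5$, because $\sigma_{-1}(k) \geq 1$. Hence $c = d = 1$, and
\[
\sigma_{-1}(k) = \frac{9/5}{(13/9)(31/25)} = \frac{405}{403}
\]
in lowest terms, forcing $13 \cdot 31 \mid k$ and therefore, since $k$ is a square, $13^2 \cdot 31^2 \mid k$. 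Then Lemma 1(d) gives $\sigma_{-1}(k) \geq \sigma_{-1}(13^2 \cdot 31^2) = 181719/162409 > 163215/162409 = 405/403$, contradicting the computed value of $\sigma_{-1}(k)$.

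For part (b), by part (a) every prime factor of $n$ is at least $5$, so if $\omega(n) \leq 4$ then Lemma 1(a),(c) give
\[
\sigma_{-1}(n) < \prod_{p \mid n} \frac{p}{p-1} \leq \frac{5}{4} \cdot \frac{7}{6} \cdot \frac{11}{10} \cdot \frac{13}{12} = \frac{1001}{576} < \frac{9}{5},
\]
contradicting $\sigma_{-1}(n) = 9/5$; hence $\omega(n) \geq 5$.

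The main obstacle is the fourth stage of part (a): one must first rule out $v_3(n) \geq 4$ and $v_5(n) \geq 4$ by sharp but elementary comparisons, and then exploit the narrow gap between $405/403$ and $\sigma_{-1}(13^2 \cdot 31^2)$, which encodes the specific numerical reason why $3$ cannot divide a friend of $10$. The other steps, and the whole of (b), are routine applications of Lemma 1.
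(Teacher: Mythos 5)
Your proposal is correct and follows essentially the same route as the paper: derive $5\mid n$, oddness, squareness from parity of $\sigma(n)$, rule out $3\mid n$ by pinning $v_3(n)=v_5(n)=2$ and contradicting $\sigma_{-1}(n/225)=405/403$ via divisibility by $403$, and finish (b) with the product bound $\frac{5}{4}\cdot\frac{7}{6}\cdot\frac{11}{10}\cdot\frac{13}{12}<\frac{9}{5}$. The only (harmless) difference is that you invoke $13^2\cdot 31^2\mid k$ and compare with $\sigma_{-1}(13^2\cdot 31^2)$, where the paper more simply uses $\sigma_{-1}(403)=448/403>405/403$.
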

\begin{proof}
The number \(n\) is a friend of \(10\), so \(\sigma(n)/n = 9/5\), so \(9n = 5\sigma(n)\), so \(5 \mid 9n\), so \(5 \mid n\). If \(2 \mid n\), then \(10 \mid n\) and \(\sigma_{-1}(n) = \sigma_{-1}(10)\), so \(n = 10\), which is impossible according to the definition of ``friend''; therefore, \(n\) is odd.

Let \(n = \prod_{j = 1}^{k}p_{j}^{a_{j}}\) where the numbers \(p_{j}\) are different primes, the numbers \(a_{j}\) are positive integers, and \(p_{1} < p_{2} < \cdots < p_{k}\).

Now \(9n = 5\sigma(n)\) and \(n\) is odd, so \(\sigma(n)\) is also odd. However,
\[\begin{array}{c}
\sigma(n) = \prod_{j = 1}^{k}(1 + p_{j} + p_{j}^{2} + \cdots + p_{j}^{a_{j}}) \equiv \prod_{j = 1}^{k}(a_{j} + 1) \pmod{2},
\end{array}\]
so every number \(a_{j}\) is even, so \(n\) is a square.

For a contradiction, suppose \(3 \mid n\). It follows that \(p_{1} = 3\), \(p_{2} = 5\), and \(a_{2} \geq 2\). If \(a_{2} \geq 4\), then \(\sigma_{-1}(n) \geq \sigma_{-1}(3^{2})\sigma_{-1}(5^{4}) > 9/5\), which is impossible; therefore, \(a_{2} = 2\). If \(a_{1} \geq 4\), then \(\sigma_{-1}(n) \geq \sigma_{-1}(3^{4})\sigma_{-1}(5^{2}) > 9/5\), which is impossible; therefore, \(a_{1} = a_{2} = 2\). Let \(m = n/(3^{2}5^{2})\). Now \(3^{2}\), \(5^{2}\), and \(m\) are pairwise relatively prime, so
\[\frac{\sigma(m)}{m} = \sigma_{-1}(m) = \frac{\sigma_{-1}(n)}{\sigma_{-1}(3^{2})\sigma_{-1}(5^{2})} = \frac{9/5}{(13/9)(31/25)} = \frac{405}{403},\]
so \(13 \times 31 = 403 \mid m\), so \(\sigma_{-1}(m) \geq \sigma_{-1}(403) > 405/403\), which is impossible. Therefore, \(n\) is not a multiple of \(3\).

If \(n\) has at most four nonidentical prime factors, then
\[\sigma_{-1}(n) < \sigma_{-1}(5^{\infty})\sigma_{-1}(7^{\infty})\sigma_{-1}(11^{\infty})\sigma_{-1}(13^{\infty}) = \frac{5}{4}\cdot\frac{7}{6}\cdot\frac{11}{10}\cdot\frac{13}{12} < \frac{9}{5},\]
which is impossible. Therefore, \(n\) has at least five nonidentical prime factors.
\end{proof}

\begin{remarks}
For odd perfect numbers \(n\) (that is, odd positive integers \(n\) such that \(\sigma_{-1}(n) = 2\)), similar arguments yield the following conclusions: (a) \(n\) is of the so-called Eulerian form \(p^{a}m^{2}\) where \(m\) and \(a\) are positive integers, \(p\) is prime, \(p\) does not divide \(m\), and \(p \equiv a \equiv 1 \pmod{4}\); and (b) \(n\) has at least three nonidentical prime factors (because \((3/2)(5/4) < 2 < (3/2)(5/4)(7/6)\)).

If \(p\) is a given prime such that \(p \geq 7\), then for friends \(n\) of \(2p\) (that is, positive integers \(n\) such that \(\sigma_{-1}(n) = (3/2)(p + 1)/p\)), similar arguments yield the following more trivial conclusions: (a) \(n\) is odd and divisible by \(p\), and (b) \(n\) has at least two nonidentical prime factors (because \(3/2 < (3/2)(p + 1)/p < (3/2)(5/4)\)). Imposing the additional condition \(p \equiv 1 \pmod{4}\) yields the result that \(n\) is a square, but does not improve the conclusion about \(\omega(n)\).

The reason why the arguments above easily restrict \(\omega(n)\) for friends of \(10\) but not for friends of \(2p\) where \(p \neq 5\) is: in the case of friends of \(10\), but apparently not in other cases, it can easily be proved that \(3\) does not divide \(n\). Thus, the friends-of-\(10\) problem seems suited to computer search to an extent that other similar problems are not. \hfill \(\qed\)
\end{remarks}

The following result will be applied in computer calculations; to obtain it, a well-known argument (see \cite[section 2]{CS2003} and \cite[section 6]{N2007}) is adapted to find bounds on an unknown prime factor of \(n\), where \(\sigma_{-1}(n)\) is known exactly or known to be in some given interval.

\begin{proposition}[Bounds on the smallest unknown prime]\label{propBounds}
Let \(\ell_{1}\), \(k_{1}\), and \(k\) be integers such that \(0 \leq \ell_{1} \leq k_{1} < k\). Let the positive integer \(n\) have the prime factorisation \(n = \prod_{j = 1}^{k}p_{j}^{a_{j}}\), where the numbers \(p_{j}\) are nonidentical prime numbers and the numbers \(a_{j}\) are positive integers. Suppose that \(p_{k_{1} + 1} < p_{k_{1} + 2} < \cdots < p_{k}\). Suppose that the integers \(b_{\ell_{1} + 1}\), \(b_{\ell_{1} + 2}\), \ldots, \(b_{k_{1}}\) satisfy \(a_{j} \geq b_{j}\) for \(j \in \{\ell_{1} + 1,\ell_{1} + 2, \ldots, k_{1}\}\).

(In applications, the primes \(p_{1}\) through \(p_{k_{1}}\) are known, the exponents \(a_{1}\) through \(a_{\ell_{1}}\) are known, but the other primes \(p_{j}\) and the other exponents \(a_{j}\) are not known; however, the lower bounds \(b_{j}\) are known.)

Suppose that the positive real numbers \(t_{\mathrm{min}}\) and \(t_{\mathrm{max}}\) satisfy \(t_{\mathrm{min}} \leq \sigma_{-1}(n) \leq t_{\mathrm{max}}\). Let
\[m = \frac{t_{\mathrm{min}}}{\left(\prod_{j = 1}^{\ell_{1}}\sigma_{-1}(p_{j}^{a_{j}})\right)\left(\prod_{j = \ell_{1} + 1}^{k_{1}}\sigma_{-1}(p_{j}^{\infty})\right)}\]
and
\[M = \frac{t_{\mathrm{max}}}{\left(\prod_{j = 1}^{\ell_{1}}\sigma_{-1}(p_{j}^{a_{j}})\right)\left(\prod_{j = \ell_{1} + 1}^{k_{1}}\sigma_{-1}(p_{j}^{b_{j}})\right)}.\]

It follows that
\begin{itemize}
\item[(a)] \(M > 1\) and \(1/(M - 1) \leq p_{k_{1} + 1}\);
\item[(b)] If \(a_{k_{1} + 1} \geq 2\) then
\[B_{\mathrm{low}} := \frac{1}{M - 1}\cdot\frac{8}{(2 - M)^{2} + 7} < p_{k_{1} + 1};\]
\item[(c)] If \(m > 1\) then
\[B_{\mathrm{high}} := 1 + \frac{k - k_{1}}{m - 1} > p_{k_{1} + 1};\]
and
\item[(d)] If \(k_{1} + 2 \leq k\), \(a_{k_{1} + 1} \geq 2\), and \(A\) is a real number such that \(1 < A \leq p_{k_{1} + 1}\) and \(m(A - 1)/A > 1\), then
\[g(A) := 1 + \frac{k - k_{1} - 1}{\displaystyle \frac{m(A - 1)}{A} - 1} > p_{k_{1} + 2}.\]
\end{itemize}
\end{proposition}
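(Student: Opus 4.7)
The plan is to extract all four bounds from a shared two-sided estimate on the ``unknown part'' of \(\sigma_{-1}(n)\). Combining the definitions of \(m\) and \(M\) with the monotonicity \(\sigma_{-1}(p^{b}) \leq \sigma_{-1}(p^{a}) \leq \sigma_{-1}(p^{\infty})\) for \(b \leq a\) (from the earlier lemma) and with \(t_{\mathrm{min}} \leq \sigma_{-1}(n) \leq t_{\mathrm{max}}\), I would obtain
\[m \;\leq\; \prod_{j = k_{1} + 1}^{k} \sigma_{-1}(p_{j}^{a_{j}}) \;\leq\; M.\]
Part (a) then follows at once: the product has at least one factor (since \(k > k_{1}\)) and each factor exceeds \(1\), so \(M > 1\); retaining only the \(j = k_{1}+1\) factor and using \(\sigma_{-1}(p^{a}) \geq (p+1)/p\) gives \(1 + 1/p_{k_{1}+1} \leq M\), which rearranges to \(p_{k_{1}+1} \geq 1/(M-1)\).

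For part (b), the hypothesis \(a_{k_{1}+1} \geq 2\) strengthens the single-factor bound to \(\sigma_{-1}(p_{k_{1}+1}^{a_{k_{1}+1}}) \geq \sigma_{-1}(p_{k_{1}+1}^{2}) = 1 + 1/p + 1/p^{2}\), where \(p = p_{k_{1}+1}\). Writing \(u = 1/p\), I have \(u^{2} + u \leq M - 1\). Since \(u \mapsto u^{2} + u\) is strictly increasing on \([0,\infty)\), the claim \(p > B_{\mathrm{low}}\), i.e., \(u < \alpha := 1/B_{\mathrm{low}} = (M-1)((M-2)^{2}+7)/8\), reduces to showing \(\alpha^{2} + \alpha > M - 1\) whenever \(M > 1\). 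Writing \(t = M - 1\) and \(s = (M-2)^{2} + 7\) and multiplying through by \(64/t\), this becomes \(8s + ts^{2} > 64\); since \(8s - 64 = 8((M-2)^{2} - 1) = 8(M-3)(M-1)\), the left-hand side minus \(64\) factors as \((M-1)(8(M-3) + s^{2})\), and the bracketed factor is at least \(-16 + 49 > 0\) because \(M \geq 1\) forces \(8(M-3) \geq -16\) and \(s \geq 7\) forces \(s^{2} \geq 49\).

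For part (c), the key tool is the following lemma, proved by induction on \(r\): \emph{if \(q_{1} < q_{2} < \cdots < q_{r}\) are distinct integers greater than \(1\), then \(\prod_{j=1}^{r}(1 + 1/(q_{j}-1)) \leq 1 + r/(q_{1}-1)\).} In the inductive step, multiplying the \(r-1\) case for \(q_{2}, \ldots, q_{r}\) by \((1 + 1/(q_{1}-1))\) reduces the required bound to \(q_{1}/(q_{2}-1) \leq 1\), which holds since \(q_{2} \geq q_{1} + 1\) as integers. Applying this lemma with \(q_{j} = p_{k_{1}+j}\), together with the strict bound \(\sigma_{-1}(p^{a}) < p/(p-1)\), gives
\[m \;\leq\; \prod_{j=k_{1}+1}^{k}\sigma_{-1}(p_{j}^{a_{j}}) \;<\; 1 + \frac{k - k_{1}}{p_{k_{1}+1}-1},\]
which rearranges to \(p_{k_{1}+1} < B_{\mathrm{high}}\).

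For part (d), I would peel off the \(j = k_{1}+1\) factor using the bound \(\sigma_{-1}(p_{k_{1}+1}^{a_{k_{1}+1}}) < p_{k_{1}+1}/(p_{k_{1}+1}-1) \leq A/(A-1)\), valid because \(1 < A \leq p_{k_{1}+1}\) and \(x \mapsto x/(x-1)\) decreases on \((1, \infty)\); this yields \(m(A-1)/A < \prod_{j=k_{1}+2}^{k}\sigma_{-1}(p_{j}^{a_{j}})\). Since \(m(A-1)/A > 1\) by hypothesis and \(k - k_{1} - 1 \geq 1\) unknown primes remain, rerunning the argument of part (c) with \(k_{1}\) replaced by \(k_{1}+1\) and with \(m(A-1)/A\) in place of \(m\) yields \(p_{k_{1}+2} < g(A)\). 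I expect the algebraic verification in part (b) to be the main obstacle, since \(B_{\mathrm{low}}\) is an explicit rational approximation to the square-root bound \(2/(\sqrt{4M-3}-1)\), and tying the two together requires the polynomial identity sketched above rather than a simple quadratic-formula citation.
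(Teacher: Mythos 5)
Your proof is correct, and parts (a), (c), and (d) follow essentially the same route as the paper: the shared two-sided estimate \(m \leq \prod_{j = k_{1} + 1}^{k}\sigma_{-1}(p_{j}^{a_{j}}) \leq M\), a single retained factor for (a), a bound of the form \(1 + r/(q_{1} - 1)\) on the product of the unknown factors for (c) (which the paper gets by the telescoping product \(\prod_{j}(p_{k_{1}+1}+j-1)/(p_{k_{1}+1}+j-2)\) rather than by induction --- the same computation), and the peeling off of a factor \(A/(A-1)\) followed by a rerun of (c) for part (d). The genuine divergence is in part (b). The paper passes through square roots: from \(M \geq 1 + 1/p + 1/p^{2} > (1 + 1/(2p))^{2}\) it deduces \(1 + 1/(2p) < \sqrt{M}\) and then invokes the polynomial bound \(\sqrt{1+x} \leq 1 + x/2 - x^{2}/8 + x^{3}/16\) (verified via a quartic identity) to convert \(\sqrt{M}\) into the rational function whose reciprocal is \(B_{\mathrm{low}}\). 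You instead set \(u = 1/p\), use that \(u \mapsto u^{2} + u\) is increasing on \([0,\infty)\), and verify the single polynomial inequality \(\alpha^{2} + \alpha > M - 1\) for \(\alpha = 1/B_{\mathrm{low}}\); your factorisation \(ts^{2} + 8s - 64 = t\left(s^{2} + 8(M-3)\right)\) with \(s^{2} + 8(M-3) \geq 49 - 16 > 0\) checks out, and the strictness of \(\alpha^{2}+\alpha > M-1\) correctly delivers the strict conclusion \(p_{k_{1}+1} > B_{\mathrm{low}}\). Your route is arguably cleaner --- it never takes a square root, and it incidentally shows that the exact bound lurking behind \(B_{\mathrm{low}}\) is \(2/(\sqrt{4M-3}-1)\), slightly stronger than the paper's \(1/(2\sqrt{M}-2)\) --- while the paper's route makes visible where the particular rational function \(8/((2-M)^{2}+7)\) comes from, namely as a truncation of the series for \(\sqrt{1+(M-1)}\), which would be the reusable ingredient if one wanted the analogous rational bound starting from a different estimate on \(\sigma_{-1}(p^{2})\).
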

\begin{proof}
(a) Since \(k_{1} < k\), the hypotheses imply
\[1 < \frac{\sigma_{-1}(n)}{\left(\prod_{j = 1}^{\ell_{1}}\sigma_{-1}(p_{j}^{a_{j}})\right)\left(\prod_{j = \ell_{1} + 1}^{k_{1}}\sigma_{-1}(p_{j}^{b_{j}})\right)}\]
\[\leq \frac{t_{\mathrm{max}}}{\left(\prod_{j = 1}^{\ell_{1}}\sigma_{-1}(p_{j}^{a_{j}})\right)\left(\prod_{j = \ell_{1} + 1}^{k_{1}}\sigma_{-1}(p_{j}^{b_{j}})\right)} = M\]
and
\[\begin{array}{c}
\displaystyle M \geq \frac{M}{t_{\mathrm{max}}}\sigma_{-1}(n) \geq \frac{M}{t_{\mathrm{max}}}\left(\prod_{j = 1}^{\ell_{1}}\sigma_{-1}(p_{j}^{a_{j}})\right)\left(\prod_{j = \ell_{1} + 1}^{k_{1}}\sigma_{-1}(p_{j}^{b_{j}})\right)\sigma_{-1}(p_{k_{1} + 1}^{1})\\
\displaystyle = \sigma_{-1}(p_{k_{1} + 1}^{1}) = 1 + \frac{1}{p_{k_{1} + 1}},
\end{array}\]
so \(1/(M - 1) \leq p_{k_{1} + 1}\).

(b) Note that if a real number \(x\) satisfies \(x \geq -1\), then \(\sqrt{1 + x} \leq 1 + x/2 - x^{2}/8 + x^{3}/16\), because
\[1 + x \leq 1 + x + \left(\frac{x}{4}\right)^{4}(16 + (2 - x)^{2}) = \left(1 + \frac{x}{2} - \frac{x^{2}}{8} + \frac{x^{3}}{16}\right)^{2}\]
and
\[1 + \frac{x}{2} - \frac{x^{2}}{8} + \frac{x^{3}}{16} = \frac{5}{16} + \frac{(1 + x)(35 + (3 - 2x)^2)}{64} \geq \frac{5}{16} > 0.\]

If \(a_{k_{1} + 1} \geq 2\), then since \(k_{1} < k\), the hypotheses imply
\[\begin{array}{c}
\displaystyle M \geq \frac{M}{t_{\mathrm{max}}}\sigma_{-1}(n) \geq \frac{M}{t_{\mathrm{max}}}\left(\prod_{j = 1}^{\ell_{1}}\sigma_{-1}(p_{j}^{a_{j}})\right)\left(\prod_{j = \ell_{1} + 1}^{k_{1}}\sigma_{-1}(p_{j}^{b_{j}})\right)\sigma_{-1}(p_{k_{1} + 1}^{2})\\
\displaystyle = \sigma_{-1}(p_{k_{1} + 1}^{2}) = 1 + \frac{1}{p_{k_{1} + 1}} + \frac{1}{p_{k_{1} + 1}^{2}} > \left(1 + \frac{1}{2p_{k_{1} + 1}}\right)^{2},
\end{array}\]
so
\[1 + \frac{1}{2p_{k_{1} + 1}} < \sqrt{M} = \sqrt{1 + (M - 1)} \leq 1 + \frac{M - 1}{2} - \frac{(M - 1)^{2}}{8} + \frac{(M - 1)^{3}}{16},\]
so
\[p_{k_{1} + 1} > \frac{1/2}{(M - 1)/2 - (M - 1)^{2}/8 + (M - 1)^{3}/16} = \frac{1}{M - 1}\cdot\frac{8}{(2 - M)^{2} + 7}.\]
The result with this rational function of \(M\) is used instead of the result \(p_{k_{1} + 1} > 1/(2\sqrt{M} - 2)\) in order to maintain exact arithmetic when performing computer calculations. In practice, the previous result \(1/(M - 1) \leq p_{k_{1} + 1}\) is improved by the additional factor of \(8/((2 - M)^{2} + 7)\), since that factor is larger than \(1\) in the case where \(1 < M < 3\).

(c) If \(m > 1\), then since \(k_{1} < k\) and \(p_{k_{1} + j} \geq p_{k_{1} + 1} + j - 1\) for each \(j \in \{1,\ldots,k - k_{1}\}\), the hypotheses imply
\[m \leq \frac{m}{t_{\mathrm{min}}}\sigma_{-1}(n) < \frac{m}{t_{\mathrm{min}}}\left(\prod_{j = 1}^{\ell_{1}}\sigma_{-1}(p_{j}^{a_{j}})\right)\left(\prod_{j = \ell_{1} + 1}^{k_{1}}\sigma_{-1}(p_{j}^{\infty})\right)\prod_{j = 1}^{k - k_{1}}\left(1 + \frac{1}{p_{k_{1} + j} - 1}\right)\]
\[\leq \prod_{j = 1}^{k - k_{1}}\left(1 + \frac{1}{p_{k_{1} + 1} + j - 2}\right) = \prod_{j = 1}^{k - k_{1}}\frac{p_{k_{1} + 1} + j - 1}{p_{k_{1} + 1} + j - 2} = \frac{p_{k_{1} + 1} + k - k_{1} - 1}{p_{k_{1} + 1} - 1},\]
so \(m - 1 < (k - k_{1})/(p_{k_{1} + 1} - 1)\), from which the last required inequality follows since \(m > 1\).

(d) Use part (c) and then part (b) to obtain
\[p_{k_{1} + 2} < 1 + \frac{k - (k_{1} + 1)}{\displaystyle \frac{m}{\sigma_{-1}(p_{k_{1} + 1}^{\infty})} - 1} \leq 1 + \frac{k - k_{1} - 1}{\displaystyle \frac{m}{A/(A - 1)} - 1}.\qedhere\]
\end{proof}

For positive integers \(m\) and \(n\) such that \(\gcd(m,n) = 1\), let \(o_{n}(m)\) be the order of \(m\) modulo \(n\), that is, the smallest positive integer \(c\) such that \(m^{c} \equiv 1 \bmod n\). Each of the following two propositions is similar to a proposition in the 2007 paper of Nielsen \cite[section 3]{N2007}; to make the current article as self-contained as possible, brief proofs are given here.

\begin{proposition}\label{propValSig}
Let \(a\) be a positive integer, let \(p\) be an odd prime, and let \(x\) be an integer such that \(x > 1\) and \(p \hspace{-0.9mm}\not\hspace{0.9mm}\mid x\).
\begin{itemize}
\item[(a)] If \(p \mid x - 1\), then \(v_{p}((x^{a + 1} - 1)/(x - 1)) = v_{p}(a + 1)\).
\item[(b)] If \(p \hspace{-0.9mm}\not\hspace{0.9mm}\mid x - 1\) and \(o_{p}(x) \mid a + 1\), then
\[v_{p}((x^{a + 1} - 1)/(x - 1)) = v_{p}(x^{o_{p}(x)} - 1) + v_{p}(a + 1).\]
\item[(c)] If \(o_{p}(x) \hspace{-0.9mm}\not\hspace{0.9mm}\mid a + 1\), then \((x^{a + 1} - 1)/(x - 1)\) is a nonmultiple of \(p\). In particular, if \(a\) is even and \(p = 5 \hspace{-0.9mm}\not\hspace{0.9mm}\mid x - 1\), then \((x^{a + 1} - 1)/(x - 1)\) is a nonmultiple of \(p\).
\end{itemize}
\end{proposition}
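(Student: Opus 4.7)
Each part of the proposition is a direct consequence of the Lifting the Exponent (LTE) lemma: for an odd prime \(p\) and an integer \(y\) with \(p \mid y - 1\) and \(p\) not a divisor of \(y\), one has \(v_{p}(y^{n} - 1) = v_{p}(y - 1) + v_{p}(n)\) for every positive integer \(n\). The plan is to establish LTE first as a preliminary lemma and then specialise. For LTE itself, I would split on whether \(p\) divides \(n\): if \(p\) does not divide \(n\), factor \(y^{n} - 1 = (y - 1)(y^{n - 1} + y^{n - 2} + \cdots + 1)\) and observe that the second factor is congruent to \(n\) modulo \(p\) (using \(y \equiv 1 \pmod{p}\)) and hence coprime to \(p\); if \(p\) does divide \(n\), write \(y^{n} - 1 = (y^{n/p})^{p} - 1\), apply the factorisation \(z^{p} - 1 = (z - 1)(z^{p - 1} + \cdots + 1)\) with \(z = y^{n/p}\), and verify that the new second factor contributes exactly one additional power of \(p\); then iterate on \(v_{p}(n)\).

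Given LTE, part (a) follows at once by applying it to \(y = x\) and \(n = a + 1\) and then subtracting \(v_{p}(x - 1)\) from both sides. For part (b), set \(d = o_{p}(x)\) and write \(a + 1 = dq\). The key small step is that \(d\) divides \(p - 1\) (since the multiplicative group modulo \(p\) has order \(p - 1\)), so \(p\) does not divide \(d\) and hence \(v_{p}(a + 1) = v_{p}(q)\). Applying LTE to \(y = x^{d}\), which satisfies \(p \mid y - 1\), and to \(n = q\) gives
\[v_{p}(x^{a + 1} - 1) = v_{p}(x^{d} - 1) + v_{p}(q) = v_{p}(x^{o_{p}(x)} - 1) + v_{p}(a + 1),\]
and since \(p\) does not divide \(x - 1\) the denominator contributes zero.

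For part (c), if \(o_{p}(x)\) does not divide \(a + 1\) then \(x^{a + 1} \not\equiv 1 \pmod{p}\), so \(p\) does not divide \(x^{a + 1} - 1\); also \(p\) does not divide \(x - 1\) (otherwise \(o_{p}(x)\) would equal \(1\), which divides every integer), so \((x^{a + 1} - 1)/(x - 1)\) is a nonmultiple of \(p\). For the parenthetical special case, the order of any element of the multiplicative group modulo \(5\) divides \(4\) and is not \(1\) (since \(5\) does not divide \(x - 1\)), so it is even and cannot divide the odd integer \(a + 1\). The main obstacle is not deep; it is only to produce a clean self-contained proof of LTE, where the \(p \mid n\) step needs care to show that exactly one extra factor of \(p\) appears.
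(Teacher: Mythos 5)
Your proposal is correct and takes essentially the same route as the paper: both reduce everything to the lifting-the-exponent identity \(v_{p}(y^{n}-1)=v_{p}(y-1)+v_{p}(n)\) for \(p\mid y-1\), proved by splitting on whether \(p\) divides the exponent (your geometric-sum factorisation of \(y^{n}-1\) and of \(z^{p}-1\) is the same computation the paper carries out via the binomial expansion of \((1+(y-1))^{r}\) and \((1+(y-1))^{p}\)), and then specialise with \(y=x\) for part (a) and \(y=x^{o_{p}(x)}\) for part (b), noting \(o_{p}(x)\mid p-1\). Your treatment of part (c), including the observation that \(o_{5}(x)\in\{2,4\}\) cannot divide the odd number \(a+1\), is likewise the intended argument.
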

\begin{proof}
If \(m\) is an integer such that \(m > 1\) and \(p \mid m - 1\), then \(v_{p}(m^{p} - 1) = v_{p}(m - 1) + 1\), because \(p\) is odd and by the binomial theorem,
\[m^{p} - 1 = (1 + (m - 1))^{p} - 1 = (m - 1)p + (m - 1)^{2}p\frac{p - 1}{2} + \left(\begin{array}{c}
\textrm{some multiple}\\
\textrm{of \((m - 1)^{3}\)}
\end{array}\right).\]
If \(m\) is an integer such that \(m > 1\) and \(p \mid m - 1\), and the positive integer \(r\) is a nonmultiple of \(p\), then \(v_{p}(m^{r} - 1) = v_{p}(m - 1)\), because by the binomial theorem,
\[m^{r} - 1 = (1 + (m - 1))^{r} - 1 = (m - 1)r + \textrm{(some multiple of \((m - 1)^{2}\))}.\]
The results of the last two sentences imply that if \(m\) is an integer such that \(m > 1\) and \(p \mid m - 1\), and \(r\) is a positive integer, then \(v_{p}(m^{r} - 1) = v_{p}(m - 1) + v_{p}(r)\). Each part of the result now follows from the fact that
\[v_{p}((x^{a + 1} - 1)/(x - 1)) = v_{p}(x^{a + 1} - 1) - v_{p}(x - 1).\]
(For part (a), take \(m = x\); for part (b), take \(m = x^{o_{p}(x)}\) and note that \(o_{p}(x)\), being a factor of \(p - 1\), is a nonmultiple of \(p\).)
\end{proof}

\begin{proposition}\label{propPrFactorSig}
Let \(p\) be an odd prime, and let \(a\) be an even positive integer. It follows that for each factor \(d\) of \(a + 1\) such that \(d > 1\), the number \(\sigma(p^{a})\) has a prime factor \(q_{d}\) such that \(o_{q_{d}}(p) = d\).
\end{proposition}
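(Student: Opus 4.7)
My plan is to fix a divisor \(d > 1\) of \(a + 1\) and exhibit a prime \(q_{d}\) with \(o_{q_{d}}(p) = d\); such a prime automatically divides \(\sigma(p^{a})\), because \(q_{d} \mid p^{d} - 1 \mid p^{a + 1} - 1\) (from \(d \mid a + 1\)) while \(q_{d} \hspace{-0.9mm}\not\hspace{0.9mm}\mid p - 1\) (since \(o_{q_{d}}(p) = d > 1\)), so Proposition \ref{propValSig}(b) applied to \(\sigma(p^{a}) = (p^{a + 1} - 1)/(p - 1)\) yields \(v_{q_{d}}(\sigma(p^{a})) = v_{q_{d}}(p^{d} - 1) + v_{q_{d}}(a + 1) \geq 1\). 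Because any such \(q_{d}\) already divides the factor \(M_{d} := \sigma(p^{d - 1}) = (p^{d} - 1)/(p - 1)\) of \(\sigma(p^{a})\), the task reduces to locating \(q_{d}\) inside \(M_{d}\).

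Suppose for contradiction that every prime \(q \mid M_{d}\) has \(o_{q}(p) = d'\) for some proper divisor \(d'\) of \(d\). Since \(a\) is even, \(d\) is odd, so \(M_{d} = 1 + p + p^{2} + \cdots + p^{d - 1}\) is a sum of \(d\) odd terms and is itself odd, forcing every such \(q\) to be odd. I would apply Proposition \ref{propValSig}: for \(d' = 1\), part (a) gives \(v_{q}(M_{d}) = v_{q}(d)\); for \(1 < d' < d\), part (b) gives \(v_{q}(M_{d}) = v_{q}(p^{d'} - 1) + v_{q}(d)\).

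Invoking the cyclotomic factorisation \(p^{d} - 1 = \prod_{e \mid d}\Phi_{e}(p)\), I would argue that for each divisor \(d' > 1\) of \(d\) the subproduct \(\prod_{q\colon o_{q}(p) = d'}q^{v_{q}(p^{d'} - 1)}\) divides \(\Phi_{d'}(p)\), because any prime \(q\) of order exactly \(d'\) modulo \(p\) is coprime to \(\Phi_{e}(p)\) for every proper divisor \(e\) of \(d'\) (as \(q \hspace{-0.9mm}\not\hspace{0.9mm}\mid p^{e} - 1\)), so the \(q\)-adic content of \(p^{d'} - 1\) resides entirely inside \(\Phi_{d'}(p)\). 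Meanwhile, the \(v_{q}(d)\) contributions across different values of \(d'\) come from pairwise distinct odd primes and together form a product of prime powers dividing the odd number \(d\). Combining these estimates gives \(M_{d} \leq d \cdot \prod_{d' \mid d,\,1 < d' < d}\Phi_{d'}(p) = d \cdot M_{d}/\Phi_{d}(p)\), hence \(\Phi_{d}(p) \leq d\). This contradicts the elementary lower bound \(\Phi_{d}(p) = \prod_{\zeta}(p - \zeta) > (p - 1)^{\varphi(d)} \geq 2^{\varphi(d)} > d\), valid for odd \(p \geq 3\) and odd \(d \geq 3\).

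The main obstacle I anticipate is the cyclotomic bookkeeping in the third paragraph, where Proposition \ref{propValSig} is the key tool in translating order information into exact valuation estimates and ultimately yielding the crucial bound \(\Phi_{d}(p) \leq d\). A secondary hurdle is confirming the size inequality \(2^{\varphi(d)} > d\) for all odd \(d \geq 3\); this reduces, via the squarefree case (where \(\varphi(d) \geq 2^{\omega(d)}\) because each odd prime factor of \(d\) contributes a factor of at least \(2\) to \(\varphi(d)\)) together with the observation that prime-power factors inflate \(\varphi\) more rapidly than \(\log d\), to routine case-checking.
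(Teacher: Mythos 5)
Your proposal is correct in substance but takes a genuinely different route from the paper. The paper's proof is two sentences: it factors \((x^{a+1}-1)/(x-1)=\prod_{d\mid a+1,\,d>1}\Phi_d(x)\) and then cites Bang's 1886 theorem \cite{B1886} on primitive prime divisors, observing that none of Bang's exceptional cases can occur because \(a\) even forces \(a+1\), hence \(d\), to be odd, and \(p\) is odd. What you have written is, in effect, a self-contained proof of the needed case of Bang's theorem by the standard Zsygmondy-style valuation argument: assuming no prime divisor of \(M_d=(p^d-1)/(p-1)\) has order exactly \(d\), you use Proposition \ref{propValSig} to write each \(v_q(M_d)\) as \(v_q(p^{d'}-1)+v_q(d)\) (or \(v_q(d)\) when \(d'=1\)), note that \(v_q(p^{d'}-1)=v_q(\Phi_{d'}(p))\) when \(o_q(p)=d'\) since \(q\) cannot divide \(\Phi_e(p)\) for \(e\mid d'\), \(e<d'\), conclude \(M_d\mid d\cdot\prod_{d'\mid d,\,1<d'<d}\Phi_{d'}(p)\) and hence \(\Phi_d(p)\le d\), and contradict this with \(\Phi_d(p)>(p-1)^{\varphi(d)}\ge 2^{\varphi(d)}>d\). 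This bookkeeping checks out; your approach buys independence from the external 1886 reference at the cost of a page of cyclotomic valuation work, while the paper's buys brevity and matches its stated goal of only being ``as self-contained as possible'' via brief proofs. One soft spot to repair: your justification of \(2^{\varphi(d)}>d\) for odd \(d\ge 3\) leans on \(\varphi(d)\ge 2^{\omega(d)}\), which is not sufficient by itself (for \(d=3\cdot 5\cdot 101\) one has \(2^{2^{\omega(d)}}=256<d\), even though \(\varphi(d)=800\) makes the true inequality easy); the inequality is correct but should be established by a direct estimate such as \(\varphi(d)\ge d\prod_{p\mid d}(1-1/p)\) with \(p\ge 3\), or by induction on the prime factorisation, rather than through \(\omega(d)\) alone.
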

\begin{proof}
Note that \((x^{a + 1} - 1)/(x - 1) = \prod_{d:d > 1 \textrm{ and } d \mid a + 1}\Phi_{d}(x)\), where \(\Phi_{d}(x)\) is the \(d\)th cyclotomic polynomial. By a result of Bang \cite{B1886}, for each integer \(d > 1\) and each prime \(p\), the number \(\Phi_{d}(p)\) has a prime factor \(q\) such that the order of \(p\) modulo \(q\) is \(d\), except if \((d,p) = (1,2)\), \((d,p) = (6,2)\), or (\(d = 2\) and \(p + 1\) is a power of \(2\)); none of the three exceptions occurs if both \(d\) and \(p\) are odd.
\end{proof}

To the author's knowledge, the following two corollaries are new.

\begin{corollary}\label{corMaxExpSum}
Let \(n\) be some odd square such that \(\omega(n) = k > 1\). Let \(r\) be some prime factor of \(\gcd(n,\sigma(n))\). Let \(c\) be a nonnegative integer such that
\[\sum_{q \text{ prime}:q \mid n, \gcd(r,q(q - 1)) = 1, o_{r}(q) \mid v_{q}(n) + 1}v_{r}(q^{o_{r}(q)} - 1) \leq c.\]
Suppose that each prime factor \(q\) of \(n\) such that \(q > r\) satisfies \(v_{q}(\sigma_{-1}(n)) = 0\). It follows that \(v_{r}(n) \leq (k - 1)^{2} + c - v_{r}(\sigma_{-1}(n))\).
\end{corollary}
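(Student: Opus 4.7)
The plan is to establish the equivalent bound \(v_{r}(\sigma(n)) \leq (k - 1)^{2} + c\), which implies the claimed inequality via \(v_{r}(\sigma_{-1}(n)) = v_{r}(\sigma(n)) - v_{r}(n)\). Writing the prime factorisation \(n = \prod_{j = 1}^{k}q_{j}^{a_{j}}\) with exactly one \(q_{j}\) equal to \(r\), multiplicativity of \(\sigma\) gives the decomposition \(v_{r}(\sigma(n)) = \sum_{j = 1}^{k}v_{r}(\sigma(q_{j}^{a_{j}}))\).

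The next step is to apply Proposition \ref{propValSig} to each summand. When \(q_{j} = r\), the number \(\sigma(r^{a_{j}})\) is congruent to \(1\) modulo \(r\), so the contribution vanishes; when \(q_{j} \neq r\) and \(o_{r}(q_{j})\) does not divide \(a_{j} + 1\), the contribution vanishes by part (c); when \(q_{j} \neq r\) and \(r \mid q_{j} - 1\), part (a) gives \(v_{r}(a_{j} + 1)\); and when \(\gcd(r,q_{j}(q_{j} - 1)) = 1\) and \(o_{r}(q_{j}) \mid a_{j} + 1\), part (b) gives \(v_{r}(a_{j} + 1) + v_{r}(q_{j}^{o_{r}(q_{j})} - 1)\). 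The extra terms \(v_{r}(q_{j}^{o_{r}(q_{j})} - 1)\) match exactly the summands in the hypothesis bound on \(c\) (noting \(a_{j} = v_{q_{j}}(n)\)), so together they contribute at most \(c\). This yields
\[v_{r}(\sigma(n)) \leq c + \sum_{j}v_{r}(a_{j} + 1),\]
where the sum is over those indices with \(q_{j} \neq r\) and \(o_{r}(q_{j}) \mid a_{j} + 1\), of which there are at most \(k - 1\).

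It then suffices to show \(v_{r}(a_{j} + 1) \leq k - 1\) for each such index. Setting \(e = v_{r}(a_{j} + 1)\), the next step is to apply Proposition \ref{propPrFactorSig} (valid because \(n\) is an odd square, so \(q_{j}\) is odd and \(a_{j}\) is even) to each of the divisors \(r, r^{2}, \ldots, r^{e}\) of \(a_{j} + 1\), producing distinct primes \(s_{1}, \ldots, s_{e}\) that each divide \(\sigma(q_{j}^{a_{j}})\), hence \(\sigma(n)\), and satisfy \(o_{s_{i}}(q_{j}) = r^{i}\); since \(r^{i} \mid s_{i} - 1\), each \(s_{i}\) exceeds \(r\). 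One then uses the hypothesis to deduce that \(s_{i}\) divides \(n\): otherwise \(v_{s_{i}}(\sigma_{-1}(n)) = v_{s_{i}}(\sigma(n)) > 0\), which is incompatible with the assumed vanishing of \(v_{q}(\sigma_{-1}(n))\) at primes \(q > r\). With this, \(s_{1}, \ldots, s_{e}\) are \(e\) distinct prime factors of \(n\) exceeding \(r\); since \(r\) is itself one of the \(k\) prime factors, \(e \leq k - 1\). Summing over the at most \(k - 1\) relevant indices yields \(\sum v_{r}(a_{j} + 1) \leq (k - 1)^{2}\), completing the argument.

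The hard part will be the last deduction that each \(s_{i}\) divides \(n\): the literal hypothesis constrains \(v_{q}(\sigma_{-1}(n)) = 0\) only for primes \(q > r\) that already divide \(n\), yet the argument needs to exclude any prime \(s > r\) that divides \(\sigma(n)\) but not \(n\). In the intended application to friends of \(10\) with \(\sigma_{-1}(n) = 9/5\), the fraction has no prime factor greater than \(5\) in either its numerator or its denominator, so for \(r \geq 5\) every prime \(s > r\) automatically satisfies \(v_{s}(\sigma_{-1}(n)) = 0\), making \(s_{i} \mid n\) immediate from \(s_{i} \mid \sigma(n)\).
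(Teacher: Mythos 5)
Your proof is correct and follows the paper's argument almost exactly: the same decomposition of \(v_{r}(\sigma(n))\) via Proposition \ref{propValSig} into the \(c\)-bounded terms plus \(\sum v_{r}(v_{q}(n) + 1)\), and the same use of Proposition \ref{propPrFactorSig} to manufacture, from \(r^{i} \mid v_{q}(n) + 1\), distinct prime factors of \(n\) exceeding \(r\). The one structural difference is the final count: the paper applies the generalised pigeonhole principle to extract a single prime \(q\) with \(v_{r}(v_{q}(n) + 1) \geq \lceil (v_{r}(\sigma(n)) - c)/(k - 1) \rceil\) and then shows that this ceiling is at most \(k - 1\), whereas you bound each term \(v_{r}(v_{q}(n) + 1) \leq k - 1\) individually and sum over the at most \(k - 1\) relevant indices. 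Your version is marginally more direct, avoids the ceiling bookkeeping and the separate \(b \leq 0\) case, and yields the same \((k - 1)^{2}\). Regarding the ``hard part'' you flag -- that the literal hypothesis controls \(v_{q}(\sigma_{-1}(n))\) only for primes \(q > r\) already dividing \(n\), while the argument needs to rule out a prime \(s_{i} > r\) dividing \(\sigma(n)\) but not \(n\) -- the paper's proof contains exactly the same step (it asserts \(v_{q_{r^{i}}}(n) = v_{q_{r^{i}}}(\sigma(n)) > 0\) without comment), so you have not introduced a new gap but rather made explicit one that the paper leaves implicit; your observation that in the intended application (\(\sigma_{-1}(n) = 9/5\), \(r \geq 5\)) the condition holds for \emph{all} primes exceeding \(r\) is the right way to close it.
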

\begin{proof}
Let \(a = v_{r}(n)\), \(a_{-1} = v_{r}(\sigma_{-1}(n))\), and \(b = \left\lceil(a + a_{-1} - c)/(k - 1)\right\rceil\). If \(b \leq 0\), then \(a \leq c - a_{-1} \leq (k - 1)^{2} + c - a_{-1}\); from now on, suppose \(b \geq 1\).

Note that \(a + a_{-1} = v_{r}(\sigma(n))\), so by Proposition \ref{propValSig},
\[\begin{array}{rcl}
a + a_{-1} & = & v_{r}(\sigma(n/r^{a}))\\
& = & \sum_{q \text{ prime}:q \mid n, q \neq r}v_{r}(\sigma(q^{v_{q}(n)}))\\
& = & \sum_{q \text{ prime}:q \mid n, r \mid q - 1}v_{r}(\sigma(q^{v_{q}(n)}))\\
& & + \sum_{q \text{ prime}:q \mid n, \gcd(r,q(q - 1)) = 1,o_{r}(q) \mid v_{q}(n) + 1}v_{r}(\sigma(q^{v_{q}(n)}))\\
& = & \sum_{q \text{ prime}:q \mid n, q \neq r, o_{r}(q) \mid v_{q}(n) + 1}v_{r}(v_{q}(n) + 1)\\
& & + \sum_{q \text{ prime}:q \mid n, \gcd(r,q(q - 1)) = 1,o_{r}(q) \mid v_{q}(n) + 1}v_{r}(q^{o_{r}(q)} - 1)\\
& \leq & c + \sum_{q \text{ prime}:q \mid n, q \neq r, o_{r}(q) \mid v_{q}(n) + 1}v_{r}(v_{q}(n) + 1).
\end{array}\]
Let \(f\) be the number of prime factors \(q\) of \(n\) such that \(q \neq r\) and \(o_{r}(q) \mid v_{q}(n) + 1\). (This includes the prime factors \(q\) of \(n\) such that \(r \mid q - 1\).) By Proposition \ref{propValSig}, \(f \geq 1\) since \(v_{r}(\sigma(n)) \geq 1\). By the generalised pigeonhole principle, there is some prime factor \(q\) of \(n\) other than \(r\) such that \(o_{r}(q) \mid v_{q}(n) + 1\) and
\[v_{r}(v_{q}(n) + 1) \geq \frac{a + a_{-1} - c}{f} \geq \frac{a + a_{-1} - c}{k - 1},\]
so \(v_{r}(v_{q}(n) + 1) \geq b\). Now \(b \geq 1\), so the numbers \(r\), \(r^{2}\), \(r^{3}\), \ldots, \(r^{b}\) are factors of \(v_{q}(n) + 1\), so by Proposition \ref{propPrFactorSig}, \(\sigma(q^{v_{q}(n)})\) has at least \(b\) different prime factors \(q_{r}\), \(q_{r^{2}}\), \ldots, \(q_{r^{b}}\) such that for each \(i \in \{1,\ldots,b\}\), the following results hold: \(o_{q_{r^{i}}}(q) = r^{i}\), so \(r^{i} \mid q_{r^{i}} - 1\), so \(q_{r^{i}} > r\), so \(v_{q_{r^{i}}}(n) = v_{q_{r^{i}}}(\sigma(n)) > 0\). Therefore, \(b \leq k - 1\) (the \(b\) different primes \(q_{r^{i}}\) are among the \(k - 1\) different prime factors of \(n\) that are not \(r\)). It follows that
\[\frac{a + a_{-1} - c}{k - 1} \leq \left\lceil\frac{a + a_{1} - c}{k - 1}\right\rceil \leq k - 1,\]
so \(a \leq (k - 1)^{2} + c - a_{-1}\).
\end{proof}

\begin{corollary}\label{corMaxExp}
For every friend \(n\) of \(10\), if \(k = \omega(n)\) then \(v_{5}(n) \leq (k - 1)^{2} + 1\).
\end{corollary}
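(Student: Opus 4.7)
The plan is to apply Corollary \ref{corMaxExpSum} directly with $r = 5$. By Proposition 2, any friend $n$ of $10$ is an odd square with $5 \mid n$ and $k = \omega(n) \geq 5 > 1$, so the square hypothesis of Corollary \ref{corMaxExpSum} holds. I would first verify that $5$ is a prime factor of $\gcd(n,\sigma(n))$: since $\sigma_{-1}(n) = 9/5$, we have $\sigma(n) = 9n/5$, so $v_{5}(\sigma(n)) = v_{5}(n) - 1$, which is at least $1$ because $n$ is a square divisible by $5$, hence by $25$. I would also note the hypothesis on primes $q > 5$ dividing $n$: since $\sigma_{-1}(n) = 9/5$, $v_{q}(\sigma_{-1}(n)) = 0$ for every such $q$.

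The main content of the proof is choosing $c$. I claim that $c = 0$ works, that is, the sum
\[\sum_{q \text{ prime}:q \mid n,\, \gcd(5,q(q - 1)) = 1,\, o_{5}(q) \mid v_{q}(n) + 1}v_{5}(q^{o_{5}(q)} - 1)\]
is empty (so equal to $0$). Here is the key observation I would use: if $q$ is a prime with $\gcd(5,q(q-1)) = 1$, then $q \neq 5$ and $q \not\equiv 1 \pmod{5}$, so $o_{5}(q)$ is a divisor of $\phi(5) = 4$ other than $1$, namely $o_{5}(q) \in \{2,4\}$, which is even. On the other hand, $n$ is a square, so $v_{q}(n)$ is even and $v_{q}(n) + 1$ is odd, which rules out $o_{5}(q) \mid v_{q}(n) + 1$. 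No $q$ contributes.

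Finally I would compute $v_{5}(\sigma_{-1}(n)) = v_{5}(9/5) = -1$ and apply Corollary \ref{corMaxExpSum} to conclude
\[v_{5}(n) \leq (k - 1)^{2} + 0 - (-1) = (k - 1)^{2} + 1.\]
The only subtle step is the parity argument that forces the sum to vanish, allowing the clean choice $c = 0$; everything else is direct bookkeeping from the properties of friends of $10$ already established in Proposition 2.
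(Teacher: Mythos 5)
Your proof is correct and takes essentially the same route as the paper: apply Corollary \ref{corMaxExpSum} with \(r = 5\) and \(c = 0\), noting \(v_{5}(\sigma_{-1}(n)) = -1\) and \(v_{5}(n) \geq 2\) so that \(5 \mid \gcd(n,\sigma(n))\). Your explicit parity argument showing the sum is empty (so \(c = 0\) is admissible) is a hypothesis check the paper leaves implicit, and it is the right justification.
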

\begin{proof}
Apply Corollary \ref{corMaxExpSum} with \(r = 5\) and \(c = 0\); note that \(v_{5}(\sigma_{-1}(n)) = -1\) and \(v_{5}(n) \geq 2\) (since \(5 \mid n\) and \(n\) is a square), so \(v_{5}(\sigma(n)) = v_{5}(\sigma_{-1}(n)) + v_{5}(n) \geq 1\).
\end{proof}

This section ends with a proposition that will be used to speed up a computer program in two situations where, apparently, the run time would otherwise be prohibitively long. First, recall the following well-known lemma (which is equivalent to \cite[Lemma 11]{N2007}).

\begin{lemma}
Let \(p\) be an odd prime, let \(a\) be a positive integer, and let \(y\) be an integer coprime to \(p\). The equation \(z^{p - 1} \equiv 1 \bmod p^{a}\) has exactly one solution \(z\) such that \(z \equiv y \bmod p\). That solution satisfies \(z \equiv y^{p^{a - 1}} \bmod p^{a}\).
\end{lemma}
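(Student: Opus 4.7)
The plan is to establish existence via the explicit formula and then deduce uniqueness from a short group-theoretic observation.

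First I would verify that \(z = y^{p^{a - 1}}\) satisfies both of the claimed conditions. Since \(y\) is coprime to \(p\) and the unit group modulo \(p^{a}\) has order \(\varphi(p^{a}) = p^{a - 1}(p - 1)\), Euler's theorem gives
\[(y^{p^{a - 1}})^{p - 1} = y^{p^{a - 1}(p - 1)} \equiv 1 \bmod p^{a},\]
so this \(z\) solves the equation. For the congruence \(z \equiv y \bmod p\), note that \(p \equiv 1 \bmod (p - 1)\), so \(p^{a - 1} \equiv 1 \bmod (p - 1)\); writing \(p^{a - 1} = 1 + (p - 1)s\) for some nonnegative integer \(s\) and invoking Fermat's little theorem yields \(y^{p^{a - 1}} = y \cdot (y^{p - 1})^{s} \equiv y \bmod p\).

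Next I would prove uniqueness. Let \(H\) be the set of residues modulo \(p^{a}\) that satisfy \(z^{p - 1} \equiv 1 \bmod p^{a}\); this is a subgroup of the unit group \(G\) modulo \(p^{a}\). The reduction-mod-\(p\) map is a surjective homomorphism from \(G\) onto the unit group modulo \(p\), and its kernel has order \(p^{a - 1}\). The intersection of that kernel with \(H\) is a subgroup whose order divides both \(p^{a - 1}\) and \(p - 1\), so it is trivial. Hence reduction-mod-\(p\) is injective on \(H\), which means no two distinct elements of \(H\) are congruent modulo \(p\); combined with the existence produced by the explicit formula above, this gives the uniqueness.

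I do not expect any serious obstacle: existence is handled entirely by the explicit formula, and uniqueness reduces to the coprimality \(\gcd(p - 1, p^{a - 1}) = 1\). A slightly slicker packaging is to invoke Hensel's lemma for \(f(X) = X^{p - 1} - 1\), whose derivative \((p - 1) X^{p - 2}\) is a unit modulo \(p\) at every root; that gives a unique lift of each \(y \bmod p\) to a root modulo \(p^{a}\), which must then equal \(y^{p^{a - 1}} \bmod p^{a}\) by the computation of the first paragraph. The only point warranting any care is keeping the existence and uniqueness arguments logically separate so that the explicit formula is not used circularly.
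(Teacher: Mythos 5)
Your proof is correct, but it reverses the logical order of the paper's argument and uses different tools. The paper obtains both existence and uniqueness in one stroke from Hensel's lemma applied to \(X^{p-1} - 1 \equiv \prod_{\widetilde{y}=1}^{p-1}(X - \widetilde{y}) \bmod p\), and only afterwards derives the explicit formula: from \(z \equiv y \bmod p\) it deduces \((zy^{-1})^{p^{a-1}} \equiv 1 \bmod p^{a}\) via the lifting-the-exponent statement (Proposition 4(a)), whence \(y^{p^{a-1}} \equiv z^{p^{a-1}} \equiv z \bmod p^{a}\). You instead get existence directly by checking the explicit formula with Euler's and Fermat's theorems, and uniqueness by showing that the kernel of reduction mod \(p\) meets the solution set \(H\) trivially; you mention the Hensel packaging only as an aside. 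Both routes work. Yours is more self-contained (no appeal to Hensel or to Proposition 4(a)); the paper's is shorter given that Proposition 4(a) is already available, and it explains \emph{why} the formula holds (it is forced by uniqueness) rather than verifying it. One small point of care in your uniqueness step: the assertion that the order of \(K \cap H\) ``divides \(p-1\)'' is not immediate from Lagrange, since \(H\) is defined by an exponent condition rather than given as a group of order \(p - 1\); the clean justification is that \(K \cap H\) is a subgroup of the kernel \(K\), so its order is a power of \(p\), while every one of its elements has order dividing \(p - 1\), so by Cauchy's theorem (or by cyclicity of \((\mathbb{Z}/p^{a}\mathbb{Z})^{\times}\) for odd \(p\)) the group is trivial. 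With that sentence repaired the argument is complete.
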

\begin{proof}
For the existence and uniqueness of \(z\), apply Hensel's lemma to \(X^{p - 1} - 1 \equiv \prod_{\widetilde{y} = 1}^{p - 1}(X - \widetilde{y}) \bmod p\). Now \((zy^{-1})^{p^{a - 1}} \equiv 1 \bmod p^{a}\) (applying Proposition \ref{propValSig}(a) if \(a > 1\)), so \(y^{p^{a - 1}} \equiv z^{p^{a - 1}} \equiv z \bmod p^{a}\).
\end{proof}

For a given odd prime \(p\) and a given positive integer \(a\), it follows that to search for integers \(x\) coprime to \(p\) such that \(p^{a - 1} < x \leq p^{a}\) and \(v_{p}(x^{p - 1} - 1) \geq a\), it is enough to check the numbers \(y^{p^{a - 1}} \bmod p^{a}\) for \(y \in \{2, \ldots, p - 1\}\). (Note that \(1^{p^{a - 1}} \equiv 1 \bmod p^{a}\).) Carrying out that check by computer for the case \(p = 31\), \(a \leq 15\) and the case \(p = 19531\), \(a \leq 7\) yields the following result, which is a slight improvement of \cite[Lemma 12]{N2007} applicable to fewer cases than that lemma.

\begin{proposition}\label{propLogCeil}
If some integer \(x\) is coprime to \(31\) and satisfies \(1 < x \leq 31^{14}\), then
\[v_{31}(x^{o_{31}(x)} - 1) \leq v_{31}(x^{30} - 1) \leq \lceil\log_{31}x\rceil + 1.\]
If some integer \(x\) is coprime to \(19531\) and satisfies \(1 < x \leq 19531^{6}\), then
\[v_{19531}(x^{o_{19531}(x)} - 1) \leq v_{19531}(x^{19530} - 1) \leq \lceil\log_{19531}x\rceil + 1.\]
\end{proposition}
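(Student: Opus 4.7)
The first bound, \(v_{31}(x^{o_{31}(x)} - 1) \leq v_{31}(x^{30} - 1)\), is a routine consequence of Fermat's little theorem: the order \(o_{31}(x)\) divides \(\phi(31) = 30\), so \(x^{o_{31}(x)} - 1\) divides \(x^{30} - 1\) as integers, and the valuation inequality follows. The analogous remark applies to \(p = 19531\) with \(\phi(p) = 19530\). In what follows I focus on the main inequality \(v_{31}(x^{30} - 1) \leq \lceil \log_{31} x \rceil + 1\).

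I would argue by contradiction and a finite check, exploiting the preceding Hensel-style lemma. Suppose some \(x\) coprime to \(31\) with \(1 < x \leq 31^{14}\) violates the bound. Set \(\alpha = \lceil \log_{31} x \rceil \in \{1, \ldots, 14\}\), so that \(31^{\alpha - 1} < x \leq 31^{\alpha}\) and \(x^{30} \equiv 1 \bmod 31^{\alpha + 2}\). The lemma, applied with \(a = \alpha + 2\), says that every solution of \(z^{30} \equiv 1 \bmod 31^{\alpha+2}\) in \([1, 31^{\alpha+2}]\) has the form \(y^{31^{\alpha+1}} \bmod 31^{\alpha+2}\) for some \(y \in \{1, \ldots, 30\}\). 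The choice \(y = 1\) gives residue \(1\) and would force \(x = 1\), which is excluded; for each \(y \in \{2, \ldots, 30\}\), because \(x \leq 31^{\alpha} < 31^{\alpha+2}\), the integer \(x\) must be the unique representative of that residue class in \(\{1, \ldots, 31^{\alpha + 2} - 1\}\).

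The plan is therefore to instruct a computer, for each \(\alpha \in \{1, \ldots, 14\}\) and each \(y \in \{2, \ldots, 30\}\), to compute \(r_{\alpha, y} := y^{31^{\alpha + 1}} \bmod 31^{\alpha + 2}\) by repeated squaring and verify that \(r_{\alpha, y} \notin (31^{\alpha - 1}, 31^{\alpha}]\). If every such \(r_{\alpha,y}\) avoids the forbidden interval then no counterexample \(x\) exists. This amounts to \(14 \times 29 = 406\) exponentiations modulo integers of size at most \(31^{16}\); the analogous check for \(p = 19531\) requires \(6 \times 19529\) exponentiations modulo integers of size at most \(19531^{8}\). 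The only real obstacle is computational: the check must use exact multi-precision integer arithmetic so as not to miss any coincidence, but any standard big-integer library handles the moduli with ease. All the theoretical work has already been packaged into the preceding lemma, which reduces an a priori infinite search to this short finite list of candidates.
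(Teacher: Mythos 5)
Your proposal is correct and takes essentially the same route as the paper: the paper likewise reduces to the exponent \(p-1\) via \(o_p(x) \mid p-1\), invokes the preceding Hensel-type lemma to list all solutions of \(z^{p-1} \equiv 1 \bmod p^{a}\) as \(y^{p^{a-1}} \bmod p^{a}\) for \(y \in \{2,\ldots,p-1\}\), and disposes of the resulting finite list of candidates by computer. The only (immaterial) difference is bookkeeping: the paper indexes the check by the modulus level \(a\) (\(a \leq 15\) for \(p = 31\), \(a \leq 7\) for \(p = 19531\)) rather than by the interval \((p^{\alpha-1}, p^{\alpha}]\) containing \(x\).
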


\section{Computer program}

The factor-chain-search scheme used by Nielsen \cite{N2007} is adapted. The core idea is: for positive integers \(n\) and primes \(p\), if \(v_{p}(n) = a > 0\), then for each prime factor \(q\) of \(\sigma(p^{a})\), if \(v_{q}(\sigma_{-1}(n)) \leq 0\) then \(v_{q}(n) \geq v_{q}(\sigma(n)) > 0\), so \(q \mid n\). In this way, a prime factor \(p\) of \(n\), together with a known exponent \(a\), can generate other prime factors \(q\) of \(n\) under mild conditions. For example, for a friend \(n\) of \(10\), if \(v_{5}(n) = 2\) then \(v_{31}(n) = v_{31}(\sigma(n)) > 0\), and if \(v_{5}(n) = 6\) then \(v_{19531}(n) = v_{19531}(\sigma(n)) > 0\).

A SageMath computer program was implemented; the program code and output files are included as supplemental files attached to this paper.\footnote{SageMath version 10.0 (release date May 20, 2023, using Python 3.11.4) was run on Conda in Mambaforge using the Windows Subsystem for Linux on Windows 11, on a laptop with a 12th Gen Intel(R) Core(TM) i5-1235U CPU (``base speed'' 1,30 GHz with many cores; in the run with \(\omega(n) = 9\), \(v_{5}(n) = 2\), and \(v_{31}(n) \leq 94\), Windows Task Manager showed a speed of about 3,5 GHz).} A detailed description of the program follows.

The program finds candidate partial prime factorisations for all positive integers \(n\) such that
\begin{itemize}
\item \(t_{\mathrm{min}} \leq \sigma_{-1}(n) \leq t_{\mathrm{max}}\),
\item \(\omega(n) = k\), and
\item Every prime \(p\) that satisfies \(v_{p}(\sigma_{-1}(n)) > 0\) is in \(S_{\mathrm{ignore}}\),
\end{itemize}
where \(t_{\mathrm{min}}\) and \(t_{\mathrm{max}}\) are user-specified rational numbers greater than \(1\), where \(k\) is a user-specified positive integer, and where \(S_{\mathrm{ignore}}\) is a user-specified finite list of primes which this paper calls \emph{ignored primes}. The user specifies a bound \(B\), which is a cutoff value above which powers of primes are considered to be ``large''.

The program performs a depth-first search of a tree of cases. At the start of each branch of the tree:
\begin{itemize}
\item There is a known \emph{on sequence} \(S_{\mathrm{on}}\), which consists of finitely many known distinct prime factors of \(n\), which are called the \emph{on primes};
\item For each prime \(q\) in \(S_{\mathrm{on}}\), it is known that \(v_{q}(n) = a_{q}\) or it is known that \(v_{q}(n) \geq b_{q}\), where \(a_{q}\) or \(b_{q}\) respectively is a known positive integer;
\item There is a known \emph{off sequence} \(S_{\mathrm{off}}\), which consists of finitely many known distinct prime factors of \(n\), each of which is not in \(S_{\mathrm{on}}\); the primes in \(S_{\mathrm{off}}\) are called the \emph{off primes};
\item For each prime \(q\) in \(S_{\mathrm{off}}\), it is known that \(v_{q}(n) \geq b_{q}\), where \(b_{q}\) is a known positive integer; and
\item A number \(P\) is known such that for every prime factor \(q\) of \(n\), if \(q\) is neither in \(S_{\mathrm{on}}\) nor in \(S_{\mathrm{off}}\), then \(q > P\).
\end{itemize}
The values of \(S_{\mathrm{on}}\), \(S_{\mathrm{off}}\), the corresponding exponents \(a_{q}\) and \(b_{q}\), and \(P\) at the start of the program -- that is, at the root of the tree -- are specified by the user. The on and off primes (respectively, their exact exponents or minimum exponents) are some prime factors (respectively, their exact exponents or nonstrict lower bounds for their exponents) in a potential number \(n\). The (exact or minimum) exponents of the on primes have been finalised; the exponents of the off primes have not yet been finalised. Each time the program moves from one level of the tree to the next level, the number of on primes increases by exactly 1, and the number of off primes may change. The off primes are thought of as being a by-product of the on primes.

The user may choose to specify, or not to specify, a \emph{special} prime \(r\) such that \(v_{r}(\sigma_{-1}(n)) = 0\); that prime, if specified, is available for the program to use in applications of Corollary \ref{corMaxExpSum}. Specifying \(r\) is a time-saving manoeuvre: it is intended to eliminate large parts of the last two levels of the tree -- that is, the two levels furthest from the root -- for good choices of \(B\). (The author specified \(r\) in two cases that would otherwise, apparently, take a prohibitively long time to be completed by the program.) If \(r\) is specified, then 
\begin{itemize}
\item At the start of the program, \(r\) is an on prime;
\item At the start of the program, numbers \(L\) and \(\delta\) are specified by the user such that for each integer \(x\), if \(x\) is coprime to \(r\) and satisfies \(1 < x \leq L\), then \(v_{r}(x^{o_{r}(x)} - 1) \leq \lceil\log_{r}x\rceil + \delta\); and
\item Each time a new prime \(q\) appears in \(S_{\mathrm{on}} \cup S_{\mathrm{off}}\), the number
\[f_{r}(q) := \left\{\begin{array}{cl}
v_{r}(q^{o_{r}(q)} - 1) & \textrm{if \(\gcd(r,q(q - 1)) = 1\)}\\
0 & \textrm{otherwise}
\end{array}\right\}\]
is calculated and stored; for each prime \(q\) that is in \(S_{\mathrm{on}} \cup S_{\mathrm{off}}\) at the start of the program, the value of \(f_{r}(q)\) is specified by the user at the start.
\end{itemize}

In each branch of the tree, the program proceeds as follows. The function \(g\) and the numbers \(M\), \(m\), \(B_{\mathrm{low}}\), and \(B_{\mathrm{high}}\) are as in Proposition \ref{propBounds}, where the primes \(p_{1},\ldots,p_{\ell_{1}}\) are the on primes \(q\) such that it is known that \(v_{q}(n) = a_{q}\), and the primes \(p_{\ell_{1} + 1},\ldots,p_{k_{1}}\) are the other on primes and the off primes.
\begin{itemize}
\item If \(|S_{\mathrm{on}}| + |S_{\mathrm{off}}| > k\), then do not proceed further along this branch of the tree (there are too many different prime factors).
\item If \(M < 1\), then do not proceed further along this branch of the tree (\(\sigma_{-1}(n)\) is too large).
\item If \(M = 1\), then show that a solution to \(\sigma_{-1}(n) = t_{\mathrm{max}}\) has been reached and do not proceed further along this branch of the tree.
\item If \(r\) is not specified and \(|S_{\mathrm{on}}| = k\), then do the following: if \(m > 1\), then do not proceed further along this branch of the tree (\(\sigma_{-1}(n)\) is too small); otherwise, print the current data as a candidate partial prime factorisation and do not proceed further along this branch of the tree.
\item If \(r\) is specified and \(|S_{\mathrm{on}}| + |S_{\mathrm{off}}| = k\), then do the following.
\begin{itemize}
\item Calculate \(s = \sum_{q \in S_{\mathrm{on}} \cup S_{\mathrm{off}}}f_{r}(q)\).
\item If \(a_{r}\) or \(b_{r}\) is strictly larger than \((k - 1)^{2} + s\), then do not proceed further along this branch of the tree (Corollary \ref{corMaxExpSum} is violated); otherwise, print the current data as a candidate partial prime factorisation and do not proceed further along this branch of the tree.
\end{itemize}
\item If \(r\) is specified, \(|S_{\mathrm{on}}| + |S_{\mathrm{off}}| = k - 1\), and \(m > 1\), then do the following.
\begin{itemize}
\item If \(B_{\mathrm{high}} > L\), then print the current data as a candidate partial prime factorisation and do not proceed further along this branch of the tree.
\item Calculate \(s = (\sum_{q \in S_{\mathrm{on}} \cup S_{\mathrm{off}}}f_{r}(q)) + \lceil\log_{r}B_{\mathrm{high}}\rceil + \delta\).
\item If \(a_{r}\) or \(b_{r}\) is strictly larger than \((k - 1)^{2} + s\), then do not proceed further along this branch of the tree (Corollary \ref{corMaxExpSum} is violated); otherwise, print the current data as a candidate partial prime factorisation and do not proceed further along this branch of the tree.
\end{itemize}
\item If no ``do not proceed further'' instruction has been encountered in this iteration of the program and \(|S_{\mathrm{off}}| \geq 1\), then do the following.
\begin{itemize}
\item Find the smallest off prime \(p\) and its minimum exponent \(b_{p}\).
\item Let \(a\) be the smallest even positive integer such that \(a \geq b_{p}\).
\item While \(p^{a} \leq B\), do the following.
\begin{itemize}
\item Find the prime factorisation of \(\sigma(p^{a})\).
\item If every prime factor of \(\sigma(p^{a})\) is greater than \(P\) or in \(S_{\mathrm{ignore}}\) or in \(S_{\mathrm{on}}\) or in \(S_{\mathrm{off}}\), then start a new branch of the tree with the data obtained from the old branch's data as follows: move \(p\) from \(S_{\mathrm{off}}\) to \(S_{\mathrm{on}}\), let \(p\) have the exact exponent \(a_{p} = a\), and for every prime factor \(q\) of \(\sigma(p^{a})\) that is neither in \(S_{\mathrm{ignore}}\) nor in \(S_{\mathrm{on}}\), do the following: if \(q\) is in \(S_{\mathrm{off}}\), then increase the minimum exponent \(b_{q}\) of \(q\) by \(v_{q}(\sigma(p^{a}))\); otherwise, append \(q\) to the end of \(S_{\mathrm{off}}\) and let it have minimum exponent \(b_{q} = v_{q}(\sigma(p^{a}))\).
\item Increase \(a\) by \(2\).
\end{itemize}
\item Start a new branch of the tree with the data obtained from the old branch's data by moving \(p\) from \(S_{\mathrm{off}}\) to \(S_{\mathrm{on}}\) and letting \(p\) have the minimum exponent \(b_{p} = a\).
\end{itemize}
\item Otherwise, if no ``do not proceed further'' instruction has been encountered in this iteration of the program, then do the following.
\begin{itemize}
\item If \(m \leq 1\), then indicate that there is no upper bound for the next prime and do not proceed further along this branch of the tree.
\item For each prime \(p\) in the interval \(I = (\max\{P,B_{\mathrm{low}}\},B_{\mathrm{high}})\) (going through those primes \(p\) in ascending order), if \(p\) is not in \(S_{\mathrm{on}}\) then do the following. (If there are no primes in \(I\), then do not proceed further along this branch of the tree.)
\begin{itemize}
\item If \(r\) is specified, \(|S_{\mathrm{on}}| + |S_{\mathrm{off}}| = k - 2\), \(B_{\mathrm{high}} \leq L\), \(m(p - 1)/p > 1\), \(g(p) \leq L\), and \(a_{r}\) or \(b_{r}\) is strictly larger than
\[\begin{array}{c}
(k - 1)^{2} + \left(\sum_{q \in S_{\mathrm{on}} \cup S_{\mathrm{off}}}f_{r}(q)\right) + \lceil\log_{r}B_{\mathrm{high}}\rceil + \lceil\log_{r}g(p)\rceil + 2\delta,
\end{array}\]
then break out of the \(p\) for loop (Corollary \ref{corMaxExpSum} is violated for all primes \(p\) yet to be checked in this loop: \(g\) is strictly decreasing and the loop goes through the primes \(p\) in ascending order).
\item Let \(a = 2\).
\item While \(p^{a} \leq B\), do the following.
\begin{itemize}
\item Find the prime factorisation of \(\sigma(p^{a})\).
\item If every prime factor of \(\sigma(p^{a})\) is greater than \(P\) or in \(S_{\mathrm{ignore}}\) or in \(S_{\mathrm{on}}\), then start a new branch of the tree with the data obtained from the old branch's data as follows: append \(p\) to the end of \(S_{\mathrm{on}}\), let \(p\) have the exact exponent \(a_{p} = a\), let the new value of \(P\) be \(p\), and for every prime factor \(q\) of \(\sigma(p^{a})\) that is neither in \(S_{\mathrm{ignore}}\) nor in \(S_{\mathrm{on}}\), do the following: if \(q\) is in \(S_{\mathrm{off}}\), then increase the minimum exponent \(b_{q}\) of \(q\) by \(v_{q}(\sigma(p^{a}))\); otherwise, append \(q\) to the end of \(S_{\mathrm{off}}\) and let it have minimum exponent \(b_{q} = v_{q}(\sigma(p^{a}))\).
\item Increase \(a\) by \(2\).
\end{itemize}
\item Start a new branch of the tree with the data obtained from the old branch's data by appending \(p\) to the end of \(S_{\mathrm{on}}\), letting \(p\) have the minimum exponent \(b_{p} = a\), and letting the new value of \(P\) be \(p\).
\end{itemize}
\end{itemize}
\end{itemize}

\begin{table}
\begin{tabular}{lllllr}
\hline
\(k\) & \(S_{\mathrm{on}}\) & \(a_{q}\) and \(b_{q}\) for \(q\) in \(S_{\mathrm{on}}\) & \(B\) & \((r, \log_{r}L, \delta)\) & (CPU time)/s\\
\hline
5 & Empty & & \(10^{3}\) & & \(<\) 1\\
\hline
6 & Empty & & \(10^{7}\) & & \(<\) 1\\
\hline
7 & Empty & & \(10^{14}\) & & \(<\) 1\\
\hline
8 & \((5)\) & \(a_{5} = a \leq 50\) & \(*\) & & 4\\
\hline
9 & \((5, 31)\) & \(a_{5} = 2\), \(a_{31} = a \leq 94\) & \(*\) & & 7105\\
\hline
9 & \((5, 31)\) & \(a_{5} = 2\), \(b_{31} = 96\) & \(10^{16}\) & \((31, 14, 1)\) & 3\\
\hline
9 & \((5, 19531)\) & \(a_{5} = 6\), \(a_{19531} = a \leq 86\) & \(*\) & & 452\\
\hline
9 & \((5, 19531)\) & \(a_{5} = 6\), \(b_{19531} = 88\) & \(10^{17}\) & \((19531, 6, 1)\) & 61\\
\hline
9 & \((5)\) & \(a_{5} = a\): & & & \\
& & \(a = 4\) & \(10^{18}\) & & 23\\
& & \(a = 8\) & \(10^{11}\) & & \(<\) 1\\
& & \(a = 10\) & \(10^{29}\) & & 26721\\
& & \(a = 12\) & \(10^{29}\) & & 27642\\
& & \(a = 46\) & \(10^{29}\) & & 27913\\
& & \(14 \leq a \leq 64\), \(a \neq 46\) & \(*\) & & 3\\
\hline
\end{tabular}
\caption{Input-related values and CPU times in computer searches for friends \(n\) of \(10\).\label{tableResults}}
\end{table}

The SageMath program was run repeatedly, according to the specifications in Table \ref{tableResults}. Each line of the table that includes a CPU time refers to one run or to multiple runs of the SageMath program. The second and third columns of the table indicate the value of \(S_{\mathrm{on}}\) and the corresponding exponents at the start of each run.
\begin{itemize}
\item If a line of the table imposes a condition on the value of the variable \(a\), where some on prime \(p\) is listed as having exponent \(a_{p} = a\), then one run was done for each even-positive-integer value of \(a\) satisfying the given conditions, and the CPU time in that line refers to all of those runs combined. At the start of each of those runs, \(S_{\mathrm{off}}\) was taken to be the increasing sequence consisting of the different prime factors \(q\) of \(\sigma(p^{a})\) such that \(q > 5\), with the following exceptions.\footnote{If \(p = 5\), then \(\gcd(\sigma(p^{a}), 30) = 1\) since \(a\) is even, so \(S_{\mathrm{off}}\) includes every prime factor of \(\sigma(p^{a})\).}
\begin{itemize}
\item In the runs where \(k = 9\), \(S_{\mathrm{on}} = (5, 19531)\), \(a_{5} = 6\), \(a_{19531} = a\), \(30 \leq a \leq 86\), and \(a \notin \{58, 72\}\): instead of laboriously factoring \(\sigma(19531^{a})\) completely, trial division was used to express \(\sigma(19531^{a})\) as a product \(\prod_{j = 1}^{m}p_{j}^{c_{j}}\) where the primes \(p_{j}\) satisfy \(p_{1} < p_{2} < \cdots < p_{m} < 2^{31}\), or as a product \((\prod_{j = 1}^{m}p_{j}^{c_{j}})c\) where the primes \(p_{j}\) and the positive integer \(c\) satisfy \(p_{1} < p_{2} < \cdots < p_{m} < 2^{31} < c\); in both cases, the numbers \(c_{j}\) are positive integers. The sequence \(S_{\mathrm{off}}\) was taken to be the sequence obtained from \((p_{1}, \ldots, p_{m})\) by removing all terms \(p_{j}\) such that \(p_{j} \leq 5\).
\item In the two runs where \(k = 9\), \(S_{\mathrm{on}} = (5, 19531)\), \(a_{5} = 6\), and \(a_{19531} = a \in \{58, 72\}\): \(S_{\mathrm{off}}\) was taken to be the singleton sequence \((q)\), where the query \verb|19531^n-1| in the FactorDB database \cite{Fnd} provided the 15-digit smallest prime factor
\[q = 316636168836007\]
of \(\sigma(19531^{58}) = (19531^{59} - 1)/19530\) for the case \(a = 58\), as well as the 26-digit smallest prime factor
\[q = 57276919728938572349117407\]
of \(\sigma(19531^{72}) = (19531^{73} - 1)/19530\) for the case \(a = 72\).
\end{itemize}
In all cases (including the exceptions above, where trial division or FactorDB was used), at the start of the run, for each off prime \(q\), the exponent \(b_{q}\) was taken to be \(v_{q}(\sigma(p^{a}))\).
\item If a line of the table does not mention a variable \(a\), then the line refers to a single run and, at the start of that run, \(S_{\mathrm{off}}\) was taken to be empty.
\item If a line of the table has a blank space in the \((r,\log_{r}L, \delta)\) column, then the run(s) used no special prime.
\end{itemize}

The numbers \(t_{\mathrm{min}}\) and \(t_{\mathrm{max}}\) were taken to be \(9/5\). The sequence \(S_{\mathrm{ignore}}\) was taken to be the singleton sequence \((3)\). The number \(P\) was taken to be \(4\) for \(k \leq 7\) and \(5\) for \(k \geq 8\).

The values of the bound \(B\) were chosen by experimentation; that process of trial and error initially used a Magma version of the program on the online Magma calculator \cite{BCP2023}, which apparently limits computation time to 60 seconds per run. Where ``\(*\)'' appears in Table \ref{tableResults}, the values of \(B\) were as follows.
\begin{itemize}
\item \(k = 8\), \(a_{5} = a \leq 50\): For the cases \(a = 2\), \(a = 4\), \(a = 6\), or \(a = 8\), the bound \(B\) was taken to be \(10^{16}\), \(10^{11}\), \(10^{17}\), or \(10^{6}\) respectively. For other values of \(a\), the bound \(B\) was taken to be \(10^{3}\), \(10^{7}\), or \(10^{14}\) if the initial value of \(|S_{\mathrm{off}}|\) was at least \(3\), exactly \(2\), or exactly \(1\) respectively.
\item \(k = 9\), (\((a_{5}, a_{31}) = (2, a)\) or \((a_{5}, a_{19531}) = (6, a)\)): The bound \(B\) was taken to be \(10^{5}\), \(10^{11}\), or \(10^{18}\) if the initial value of \(|S_{\mathrm{off}}|\) was at least \(3\), exactly \(2\), or exactly \(1\) respectively.
\item \(k = 9\), \(a_{5} = a \in [14, 64]\), \(a \neq 46\): The bound \(B\) was taken to be \(10^{4}\), \(10^{7}\), or \(10^{14}\) if the initial value of \(|S_{\mathrm{off}}|\) was at least \(4\), exactly \(3\), or exactly \(2\) respectively.
\end{itemize}

\section{Results}

Each run of the program terminated without errors and found no candidate partial prime factorisations. The runs with \(5 \leq k \leq 7\) rule out friends \(n\) of \(10\) such that \(5 \leq \omega(n) \leq 7\). By Corollary \ref{corMaxExpSum} and Corollary \ref{corMaxExp}, the runs with \(8 \leq k \leq 9\) rule out friends \(n\) of \(10\) such that \(8 \leq \omega(n) \leq 9\). Therefore, the main theorem of this paper is proved:
\begin{theorem}
Each friend of \(10\) has at least \(10\) nonidentical prime factors.
\end{theorem}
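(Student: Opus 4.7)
The plan is to establish $\omega(n) \geq 10$ for every friend $n$ of $10$ by ruling out $\omega(n) = k$ for each $k \in \{5, 6, 7, 8, 9\}$, and then combining this with the earlier result (the first proposition of Section 2) that $\omega(n) \geq 5$. In every case the tool is the depth-first tree search described in Section 3, run with $t_{\mathrm{min}} = t_{\mathrm{max}} = 9/5$, ignored primes $S_{\mathrm{ignore}} = (3)$ (valid because $\gcd(n, 6) = 1$), and the initial value of $P$ taken just above the largest ignored prime. A run that terminates without producing any candidate partial prime factorisation closes off that value of $k$ together with whatever starting data was used.

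For $k \in \{5, 6, 7\}$ I would launch the search from the root with $S_{\mathrm{on}} = S_{\mathrm{off}} = \emptyset$; the bounds of Proposition \ref{propBounds} keep the tree narrow enough that each such run should complete essentially instantaneously. For $k \in \{8, 9\}$ the tree from an empty root is too large, so the reduction is to branch on the exponent $a_5 = v_5(n)$, which by Corollary \ref{corMaxExp} satisfies $a_5 \leq (k-1)^2 + 1$ and which, because $n$ is an odd square with $5 \mid n$, must also be an even positive integer. For each admissible $a_5$ I would launch a separate run with $S_{\mathrm{on}} = (5)$, exact exponent $a_5$, and $S_{\mathrm{off}}$ initialised from the primes exceeding $5$ in the factorisation of $\sigma(5^{a_5})$ (using trial division, or the FactorDB database, in the few instances where full factorisation is out of reach).

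The main obstacle is expected to be the two subcases $(k, a_5) = (9, 2)$ and $(9, 6)$, where $\sigma(5^{a_5})$ is itself a single prime ($31$ and $19531$ respectively), so the search is forced through many levels with only one new on prime introduced at the root; a naive enumeration over the exponent of that new prime appears to be prohibitively expensive. The plan is to split each such subcase at a hand-picked threshold on $v_{31}(n)$ or $v_{19531}(n)$. Below the threshold the subcase is handled by direct enumeration with the exact exponent of that prime fixed. Above the threshold, I would designate $r \in \{31, 19531\}$ as the special prime of Corollary \ref{corMaxExpSum} and invoke Proposition \ref{propLogCeil} to bound $v_r(q^{o_r(q)} - 1)$ for each newly appearing prime $q$; this allows the program to prune any branch on which the accumulated quantity $s$ (the sum of $f_r$ values plus the logarithmic overestimates from the unknown primes) together with the current lower bound on $v_r(n)$ would violate $v_r(n) \leq (k - 1)^2 + s$, eliminating the otherwise intractable last two levels of the tree and allowing the runs to terminate.
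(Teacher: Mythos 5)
Your proposal is correct and follows essentially the same route as the paper: rule out $5 \leq \omega(n) \leq 7$ from an empty root, rule out $k \in \{8,9\}$ by branching over the even exponents $a_5 \leq (k-1)^2+1$ supplied by Corollary \ref{corMaxExp}, and rescue the hard subcases $(k,a_5)=(9,2)$ and $(9,6)$ (where $\sigma(5^{a_5})$ is the single prime $31$ or $19531$) by splitting at a threshold on the exponent of that prime and invoking Corollary \ref{corMaxExpSum} with Proposition \ref{propLogCeil} above it. The only minor divergence is that the paper handles the remaining prime-$\sigma(5^{a_5})$ cases $a_5 \in \{10,12,46\}$ by long direct runs rather than by any special treatment, which your plan does not preclude.
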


\section{Remarks concerning run time}

Apparently, Corollary \ref{corMaxExp} is very important for the proof, because the Corollary is needed to ensure that the runs for \(k = 9\) finish in a reasonable amount of time. The runs for \(5 \leq k \leq 7\) did not use Corollary \ref{corMaxExp}, and each of those runs took less than a second. However, although a run that tackled \(k = 8\) in the same way as \(5 \leq k \leq 7\) (using \(S_{\mathrm{on}} = \emptyset\), using \(B = 10^{29}\), using \(P = 4\), using no special prime, and without using Corollary \ref{corMaxExp}) was successful, it took 28500 seconds to complete. The author believes that it is futile to try doing \(k = 9\) like this. Indeed, in two cases with \(k = 9\), Corollary \ref{corMaxExp} was not enough to reduce the run time to a manageable duration, and the results regarding special primes also needed to be used. (Many thanks to an anonymous referee for inquiring about the importance of Corollary \ref{corMaxExp}.)

The Cunningham Project tables \cite{Betal1988} confirm that for even positive integers \(a \leq 124\), the number \(\sigma(5^{a})\) has at least two different prime factors, except if \(a \in \{2, 6, 10, 12, 46\}\), in which case \(\sigma(5^{a})\) is prime. As Table \ref{tableResults} illustrates, each of the five cases
\[(\omega(n),v_{5}(n)) \in \{(9, 2), (9, 6), (9, 10), (9, 12), (9, 46)\}\]
takes several hours or uses the time-saving special prime \(r\), whereas all other runs combined take less than a minute without using the special prime \(r\). The five cases \(v_{5}(n) \in \{2, 6, 10, 12, 46\}\) would most likely be the bottlenecks for any potential attempt, using this approach, to investigate friends \(n\) of 10 such that \(10 \leq \omega(n) \leq 12\). (In order to use the 33-digit number \(\sigma(5^{46})\) as the special prime \(r\) in the case \(v_{5}(n) = 46\), a version of Proposition \ref{propLogCeil} would be needed. To prove such a statement, a direct check of all possible remainders modulo \(\sigma(5^{46})\) would not be feasible.)

As an anonymous referee kindly pointed out, the analogous run times for the three runs with \((\omega(n),v_{5}(n)) \in \{(9, 10), (9, 12), (9, 46)\}\) indicate that many calculations could be common to the three runs. This is indeed true: the large prime \(\sigma(5^{v_{5}(n)})\) contributes little to \(\sigma(n)\) in all three runs, so significant sections of the trees of cases are the same in all three runs apart from the value of that prime. (For example, consider the first few levels within the branch of the tree where \(\sigma(5^{v_{5}(n)})\) is moved from \(S_{\mathrm{off}}\) to \(S_{\mathrm{on}}\) at the first step.) However, the three runs are not completely identical (so simply exchanging the three runs for one run representing \(v_{5}(n) \geq 10\) is not possible), and they were sufficiently fast for the purposes of this paper. In general, as the referee also kindly pointed out, the program takes advantage of being able to use a specific value of \(\sigma(5^{v_{5}(n)})\) to generate tree branches.

In the runs with \((k, a_{5}) = (9, 2)\) where \(a_{31} = a \leq 94\), it appears that most of the time was used to factorise \(\sigma(31^{a})\), which is why trial division was used to avoid having to factorise \(\sigma(19531^{a})\) completely in the runs with \((k, a_{5}) = (9, 6)\) where \(a_{19531} = a \in [30, 86]\) and \(a \notin \{58, 72\}\). The numbers \(\sigma(19531^{58})\) and \(\sigma(19531^{72})\) are composite but have no prime factors less than \(2^{31}\) (the largest allowed finite upper limit in the \verb|factor_trial_division| routine in SageMath), which is why FactorDB was used to find the smallest prime factor of each of these two numbers.

\section*{Acknowledgements}

Many thanks indeed to Prof.\@ Mapundi Banda and to everyone else at the University of Pretoria for their generous continuing support.

Many thanks to the anonymous referee(s) for their much-appreciated review and comments.

This research did not receive any specific grant from funding agencies in the public, commercial, or not-for-profit sectors.

\end{document}